\documentclass[a4paper]{article}

\usepackage[utf8]{inputenc}
\usepackage[T1]{fontenc}

\usepackage{mathbbol}
\usepackage{amsfonts}
\usepackage{amssymb}
\DeclareSymbolFontAlphabet{\mathbb}{AMSb}
\DeclareSymbolFontAlphabet{\mathbbl}{bbold}
\usepackage{amsthm}

\usepackage{mathtools}
\usepackage{mathbbol}
\usepackage{enumitem}
\usepackage{verbatim}
\usepackage{graphicx}

\usepackage{tikz-cd}

\usepackage[colorlinks]{hyperref}    
\usepackage{cleveref}

\theoremstyle{plain}
\newtheorem{thm}{Theorem}

\newtheorem{lemma}[thm]{Lemma}
\newtheorem{cor}[thm]{Corollary}

\newtheorem{remark}[thm]{Remark}
\newtheorem*{remark*}{Remark}
\newtheorem*{falseexample*}{Naive reasoning}

\theoremstyle{definition}
\newtheorem{definition}[thm]{Definition}

\newtheorem{notation}[thm]{Notation}

\newcommand{\nat}{\mathbb{N}}
\newcommand{\integ}{\mathbb{Z}}
\newcommand{\real}{\mathbb{R}}
\newcommand{\compl}{\mathbb{C}}

\DeclareMathOperator{\CS}{CS}

\newcommand{\sphere}[1]{\mathbb{S}^{#1}}
\newcommand{\cercle}{\sphere{1}}

\newcommand{\spheretwon}{\mathbb{S}^{2n}}

\newcommand{\sumrisqvarphiidiff}{\sum_{i=1}^{n} r_i^2 d\varphi_i}
\newcommand{\sumzinormsquare}{\sum_{i=1}^{n}\vert z_i \vert^2}

\DeclareMathOperator{\Id}{Id}
\DeclareMathOperator{\image}{Im}

\newcommand{\diff}[1]{\mathrm{Diff}\left(#1\right)}
\newcommand{\diffVxi}{\diff{V,\xi}}
\newcommand{\diffV}{\diff{V}}

\newcommand{\ContStr}[1]{\mathrm{Cont}\left(#1\right)}
\newcommand{\ContStrV}{\ContStr{V}}

\newcommand{\lineW}{\overline{W}}
\newcommand{\double}{\mathcal{D}}
\newcommand{\doubleW}{\double W}
\newcommand{\doublefunW}[1]{\double_{#1}W}

\newcommand{\What}{\widehat{W}}

\newcommand{\GL}{GL}

\newcommand{\family}{\mathfrak{L}}

\newcommand{\lie}{\mathcal{L}}

\newcommand{\difff}[1]{\mathrm{FDiff}\left(#1\right)}

\newcommand{\diffVxif}{\difff{V,\xi}}

\newcommand{\xistab}{\xi_{stab}}

\DeclarePairedDelimiter\abs{\lvert}{\rvert}%
%



%
%

\title{Examples of contact mapping classes \\ of infinite order in all dimensions}
\author{Fabio Gironella}
\date{}

\begin{document}

\def\co{\colon\thinspace}
\def\coeq{\coloneqq\thinspace}

\maketitle

\begin{abstract}
We give examples of tight high dimensional contact manifolds admitting a contactomorphism whose powers are all smoothly isotopic but not contact-isotopic to the identity. 
This is a generalization of an observation in dimension $3$ by Gompf, also reused by Ding and Geiges.
\end{abstract}

\section{Introduction}
\label{SecIntro}

In this paper we study the topology of the space of contactomorphisms $\diffVxi$ of a given contact manifold $(V,\xi)$, comparing it with the one of the space of diffeomorphisms $\diffV$ of the underlying smooth manifold $V$. 
\\
It is known that the space $\ContStrV$ of contact structures on $V$ plays an important role in the study of the relations between $\diffVxi$ and $\diffV$. 
Indeed, if $V$ is a closed manifold, then the map $\diffV\rightarrow\ContStrV$, defined by $\phi\mapsto \phi_*\xi$, is a locally-trivial fibration with fiber $\diffVxi$; this essentially follows from (the proof of) Gray's theorem, as explained for instance in \cite{MasGir15,MasFibrNotes}. (See also \cite{GeiGon04}, in which it is proved that the map is a Serre fibration, which is enough for this discussion.)
This fibration induces a long exact sequence of homotopy groups
\begin{equation}
	\label{eq:long_exact_seq}
	\ldots \rightarrow \pi_{k+1}\left(\ContStrV\right)\rightarrow \pi_k\left(\diffVxi\right) \xrightarrow{j_*} \pi_k\left(\diffV\right)\rightarrow \pi_k\left(\ContStrV\right)\rightarrow\ldots
\end{equation}
where $j_*:\pi_k\left(\diffVxi\right)\rightarrow\pi_k\left(\diffV\right)$ is the map induced on the homotopy groups by the natural inclusion $j:\diffVxi\rightarrow\diffV$.
(Here, the homotopy groups of $\diffV$ and $\diffVxi$ are considered with base point $\Id$, whereas those of $\ContStrV$ are considered based at $\xi$; for notational ease, this natural choice of base points is suppressed from the notation.)

The higher (i.e.\ $k\geq2$) homotopy groups $\pi_k(\ContStrV,\xi)$ have not been studied much in the literature.
To our knowledge, the only works dealing with these higher homotopy groups are \cite{Bou06,CasSpa16,FerGir19},
which use, respectively, contact homology, some algebraic topology and the $h$-principle from \cite{BorEliMur15} in order to study the groups $\pi_k\left(\ContStrV,\xi\right)$ for some particular contact manifolds $(V,\xi)$ and some $k>1$.

In the rest of the literature, the focus is typically on the study of the map $j_*:\pi_0\left(\diffVxi\right)\rightarrow\pi_0\left(\diffV\right)$ induced by $j$ on the space of connected components, which is of course deeply related, via the exact sequence above, to the study of $\pi_1\left(\ContStrV,\xi\right)$.
The results available so far in this direction also consist in concrete examples of contact manifolds $(V,\xi)$ where, thanks to the specific geometry of the underlying manifold $V$, one can use techniques from both contact geometry, such as convex surface theory in the tight and overtwisted $3$-dimensional case and holomorphic curves in the tight high-dimensional case, and from algebraic topology, in the overtwisted high dimensional case, to obtain results  on the map $j_*\vert_{\pi_{0}}$ and on the fundamental group of $\ContStrV$. 
\\
For instance, one can find in the literature several examples of contact manifolds $(V,\xi)$ for which $\ker\left(j_*\vert_{\pi_{0}}\right)$ is non-trivial; 
the interested reader can consult \cite{Gom98,Gir01,GeiGon04,Bou06,DinGei10,GeiKlu14,MasGir15} and \cite{Dym01,Vog16} for, respectively the tight and overtwisted $3$-dimensional cases, and \cite{Bou06,MasNie16,LanZap15} and \cite{My17a} for, respectively, the tight and overtwisted higher-dimensional cases.
Notice that the examples in \cite{Bou06,MasNie16,LanZap15} are tight according to the definition of overtwistedness in higher dimensions in \cite{BorEliMur15}, which generalizes the $3$-dimensional one given in \cite{Eli89}.

This paper focuses more precisely on the problem of the existence of infinite cyclic subgroups in $\ker(j_*\vert_{\pi_{0}})$.
To our knowledge, the only known example of such phenomenon is given in \cite{Gom98,DinGei10}. 
More precisely, in \cite[Remark at page 642]{Gom98}, using rotation numbers, Gompf argues that $\sphere{2}\times\cercle$, equipped with its unique (up to isotopy) tight contact structure $\xi_{std}$, has a contact mapping class of infinite order.
Then, starting from Gompf's remark, Ding and Geiges prove in \cite{DinGei10} that $\ker(j_*\vert_{\pi_{0}})$ and $\pi_1(\ContStr{\sphere{2}\times\cercle},\xi_{std})$ are actually both isomorphic to $\integ$.

Our aim is to give explicit examples of high-dimensional tight manifolds that admit an element of infinite order in $\ker(j_*\vert_{\pi_{0}})$.
This is achieved by first exhibiting elements of infinite order in $\pi_0(\diffVxi)$ for $V$ given by the product of the double $\doubleW$ of a stabilized Weinstein domain $W$ and the circle $\cercle$, equipped with a natural fillable contact structure $\xi$ on it.
For some particular choices of $W$, these infinite-order elements of $\pi_0(\diffVxi)$ can moreover be shown to be in $\ker(j_*\vert_{\pi_{0}})$.
\\
More precisely, we start by analyzing the following general situation.
Consider a Weinstein manifold $(F^{2n-2},\omega_F,Z_F,\psi_F)$; namely, $F$ has only positive boundary component $\partial F = \partial_+ F$, $\omega_F$ is a symplectic form on $F$, $\psi_F\colon F \rightarrow \real$ is an exhausting (i.e.\ proper and bounded from below) Morse function, such that $\psi_F$ is constant on $\partial_+ F$ and $Z_F$ is a complete Liouville vector field for $\omega_F$, which is also gradient-like for $\psi_F$. 
Consider then the stabilization $(F\times\compl,\omega_F\oplus\omega_0,Z_F+Z_0,\psi_F+\abs{.}^2_\compl)$, where $\omega_0=rdr\wedge d\varphi$ and $Z_0 = \frac{1}{2}r\partial_r$, using coordinates $z=re^{i\varphi}\in\compl$. 
Suppose that $c> \min \psi_F$ is a regular value of $\psi\coeq\psi_F+\abs{.}^2_\compl$ and let $W$ be the compact domain $\psi^{-1}((-\infty,c])$.
We also assume that there is an almost complex structure $J_F$ on $F$, which is tamed by $\omega_F$ and such that $(TF,J_F)$ is trivial as complex bundle over $F$.
\\
Consider now the Weinstein manifold $(F\times\compl\times\real\times\cercle,\omega',Z',\psi')$, where, using coordinates $(s,\theta)\in\real\times\cercle$, $\omega' = \omega_F + \omega_0 + ds\wedge d\theta$, $Z' = Z_F + Z_0 + s\partial_s$ and $\psi'(p,z,s,\theta) = \psi(p,z) + s^2$.
The preimage $(\psi')^{-1}(c)$, which is diffeomorphic to the product of the double $\doubleW\coeq W\cup_{\partial W} \lineW$ of $W$ and $\cercle$, is naturally equipped with the contact structure $\xi=\ker\alpha$, where $\alpha = (\iota_{Z'}\omega')\vert_{\doubleW\times\cercle}$.
Moreover, the diffeomorphism of $F\times\compl\times\real\times\cercle$ given by $(q,z,s,\theta)\mapsto (q,e^{i\theta}z,s,\theta)$ restricts to a well defined diffeomorphism $\Psi$ of $\doubleW\times\cercle$.
\\
We will then prove the following result:
\begin{thm}
	\label{ThmMain}
	In the setting described above (recall, in particular, that $(TF,J_F)$ is assumed to be trivial), the diffeomorphism $\Psi$ of $\doubleW\times\cercle$ is smoothly isotopic to a contactomorphism $\Psi_c$ of $(\doubleW\times\cercle,\xi)$ such that, for each integer $k\neq0$, its $k$-th iterate is not contact-isotopic to the identity.
\end{thm}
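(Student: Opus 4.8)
The plan is to prove the two assertions — that $\Psi$ smooths to a contactomorphism $\Psi_c$, and that $[\Psi_c]$ has infinite order in $\pi_0\big(\diff{\doubleW\times\cercle,\xi}\big)$ — by quite different arguments. For the first, the observation is that $\Psi$ fails to preserve $\alpha$ only by an exact term that can be absorbed into a deformation of the ambient Liouville structure: in the coordinates $(q,r,\varphi,s,\theta)$ one computes $\Psi^*\alpha=\alpha+\tfrac12 r^2\,d\theta$. Setting $\omega'_t\coloneqq\omega'+\tfrac t2\,d(r^2\,d\theta)$ for $t\in[0,1]$, one checks directly that each $\omega'_t$ is symplectic, that $Z'$ is Liouville for $\omega'_t$ — because $\lie_{Z'}(r\,dr\wedge d\theta)=r\,dr\wedge d\theta$ — and that $Z'$ remains positively transverse to the (unchanged) level set $(\psi')^{-1}(c)$; hence $\alpha_t\coloneqq(\iota_{Z'}\omega'_t)\vert_{\doubleW\times\cercle}$ is a path of contact forms from $\alpha$ to $\Psi^*\alpha$. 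Since $\doubleW\times\cercle$ is closed, Gray stability produces an isotopy $\{\phi_t\}$ with $\phi_0=\Id$ and $(\phi_1)_*\xi=\ker\alpha_1=\Psi^*\xi$, and then $\Psi_c\coloneqq\Psi\circ\phi_1$ is a contactomorphism of $(\doubleW\times\cercle,\xi)$ smoothly isotopic to $\Psi$ via $\{\Psi\circ\phi_t\}$.

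For the second assertion, suppose for contradiction that $\Psi_c^k$ is contact-isotopic to the identity for some $k\neq0$. The contact manifold $(\doubleW\times\cercle,\xi)$ bounds the Weinstein domain $X\coloneqq(\psi')^{-1}((-\infty,c])$, and the hypothesis that $(TF,J_F)$ is complex-trivial forces $K_X$, hence $c_1(X)$, to vanish, so an appropriate holomorphic-curve invariant of $(\doubleW\times\cercle,\xi)$ — e.g.\ its linearized contact homology, or positive ($S^1$-equivariant) symplectic homology of $X$ — carries a genuine $\integ$-grading. Since $W$ is a stabilization, $X$ is subcritical, so $SH(X)=0$ and this invariant is, up to a fixed degree shift, the reduced singular homology of $X$; as $X$ deformation retracts onto $W_F\times\cercle$ with $W_F\coloneqq\psi_F^{-1}((-\infty,c])$, it is therefore a nonzero graded group supported in a bounded range of degrees. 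Now $\Psi$ is, tautologically, the suspension of the loop of strict contactomorphisms $\rho_t\colon(q,z,s,\theta)\mapsto(q,e^{it}z,s,\theta)$, $t\in\cercle$, and iterating replaces this loop by its $k$-th power. The key step is then a Conley--Zehnder/Maslov-index computation showing that $\Psi_c$ acts on the graded invariant above by an automorphism which, on the part generated by Reeb orbits in a suitable relative homotopy class, is a shift of the grading by a fixed nonzero integer $d$ — the index produced by one full rotation of the $\compl$-factor; so $\Psi_c^k$ shifts the grading by $kd\neq0$. But a contactomorphism contact-isotopic to the identity acts trivially on this invariant, while a nonzero graded group supported in a bounded range of degrees admits no nonzero grading shift as an automorphism; contradiction.

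The main obstacle is this last index computation, and in particular the fact that one cannot get away with a soft substitute for it. Detecting $\Psi_c^k$ through the homotopy class of the framing it induces on $\xi$ — i.e.\ through $\pi_1$ of the space of almost contact structures, hence through $\pi_1$ of $\ContStr{\doubleW\times\cercle}$ — would be enough in principle, but that space is out of reach in dimension $>3$; this is exactly where the present situation differs from the three-dimensional examples of Gompf and of Ding--Geiges, where the corresponding space is understood. One therefore works at chain level: build the map induced by $\Psi_c$ on the chosen invariant equivariantly with respect to the $\compl$-rotation, read off the degree by which it shifts the Conley--Zehnder grading of a suitable generator, and check that this degree is nonzero using the explicit trivialization of $K_X$. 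The other ingredients — contact-isotopy invariance of the graded invariant, the vanishing of $SH(X)$ and the resulting computation of the invariant as $H_*(W_F\times\cercle)$ up to shift, and the passage from ``$\Psi_c^k$ contact-isotopic to $\Id$'' to triviality of the $k$-th power of the shift — are routine by comparison.
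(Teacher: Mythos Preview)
Your construction of $\Psi_c$ via Gray stability is essentially the paper's argument (the paper's \Cref{LemmaContactomorphismProdWithCircle} makes the Gray vector field explicit, but the underlying idea is the same path $\alpha_t=\alpha+\tfrac t2 r^2\,d\theta$ of contact forms).

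For the infinite-order assertion, however, you head in a very different direction from the paper, and your dismissal of the ``soft substitute'' is exactly where you go astray. You assert that detecting $\Psi_c^k$ via the framing it induces on $\xi$ is out of reach in high dimensions; but this is precisely what the paper does, and quite cheaply. Rather than attacking $\pi_1$ of the full space of almost contact structures, the paper restricts attention to a single loop $\gamma\colon\cercle\to\doubleW\times\cercle$ (the circle $\theta\mapsto(q_0,x_0,-1,\theta)$ sitting in the ``flat'' copy $W_-\times\cercle$ of the double), picks a Lagrangian frame $\family$ of $\xi$ along $\gamma$, and --- using the hypothesis that $(TF,J_F)$ is trivial, which promotes to a global trivialization $\mu$ of the ambient complex tangent bundle --- reads off the stabilized pushforward $(\Psi_c^k)_*\family\oplus Z^\double$ as a loop $B_k\colon\cercle\to\GL_{n+1}(\compl)$. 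A direct calculation (the rotation of the $\compl$-factor puts a factor $e^{ik\theta}$ in one row of $B_k$) gives $\det B_k(\theta)=b_k e^{ik\theta}$ with $b_k\neq0$, so $B_k$ is homotopically nontrivial for $k\neq0$; an elementary obstruction lemma (a contact isotopy from $\Id$ to $\Psi_c^k$ would drag $\family$ to $(\Psi_c^k)_*\family$ through Lagrangian frames, hence $B_0$ to $B_k$ through maps $\cercle\to\GL_{n+1}(\compl)$) then finishes. No holomorphic curves enter.

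Your Floer-theoretic route is in the spirit of Bourgeois's approach and could plausibly be made to work, but as written it has a genuine gap: you yourself flag the ``main obstacle'' as the Conley--Zehnder computation showing that $\Psi_c$ shifts the grading by some $d\neq0$, and then do not carry it out. Without that computation the argument does not close --- and even formulating precisely the claim that $\Psi_c$ acts as a \emph{pure} grading shift (rather than an arbitrary graded automorphism) on $SH^+(X)$ or linearized contact homology already requires care about how contactomorphisms extending over the filling act functorially. The paper sidesteps all of this: the triviality hypothesis on $(TF,J_F)$ is there exactly to make the elementary Maslov-type winding number along $\gamma$ available, and that winding number \emph{is} the nonzero shift $d$ you are after, computed directly without any Floer machinery.
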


A direct application of \Cref{ThmMain} with $F=\compl^{n-1}$, $\omega_F= \sum_{i=1}^{n-1}r_i dr_i\wedge d\varphi_i$, $Z_F =\frac{1}{2} \sum_{i=1}^{n-1} r_i\partial_{r_{i}}$, $\psi_F(z_1,\ldots,z_{n-1})=r_1^2+\ldots + r_{n-1}^2$ and $c=1$, where we use polar coordinates $(z_1=r_1e^{i\varphi_{1}},\ldots,z_{n-1}=r_{n-1}e^{i\varphi_{n-1}})$ on $F=\compl^{n-1}$, gives the following generalization of the observation in \cite{Gom98} to higher dimensions:
\begin{cor}
	\label{CorMain}
	Let $(z_1,\ldots,z_n,s,\theta)$ be coordinates on $\compl^n\times\real\times\cercle=\real^{2n+1}\times\cercle$ and $\xi$ be the tight contact structure on $V\coeq\spheretwon\times\cercle$ defined by the restriction of $\lambda = sd\theta + \frac{1}{2}\sumrisqvarphiidiff $ on $\real^{2n+1}\times\cercle$ to $\spheretwon\times\cercle=\{s^2 + \sumzinormsquare=1\}$.
	Consider now the diffeomorphism $\Psi$ of $\spheretwon\times\cercle$ given by the restriction of 
	\begin{align*}
	\real^{2n+1}\times\cercle & \longrightarrow\real^{2n+1}\times\cercle \\
	(z_1,\ldots,z_n,s,\theta)& \mapsto (z_1,\ldots,z_{n-1},e^{i\theta}z_n,s,\theta)
	\end{align*}
	\\
	Then, $\Psi$ is smoothly isotopic to a contactomorphism $\Psi_c$ of $(V,\xi)$ such that $[\Psi_c^2]$ generates an infinite cyclic subgroup of $\ker\left(\pi_0\diff{V,\xi}\rightarrow\pi_0\diff{V}\right)$.
\end{cor}
Notice that each even power of $\Psi_c$ in \Cref{CorMain} is indeed smoothly isotopic to the identity: 
because the fundamental group of $SO(m)$ is isomorphic to $\integ_2$ for all $m\geq3$, 
there is, for all $k\in\nat$, a smooth isotopy of $\spheretwon\times\cercle$, (globally) preserving each submanifold $\spheretwon\times\{pt\}$, from $\Psi^{2k}$ to the identity; in particular, $\Psi_c^{2k}$ is also smoothly isotopic to the identity.

Analogously to \Cref{CorMain}, \Cref{ThmMain} can be applied to the case of $F=T^*\mathbb{T}^n$, $\omega_F=\sum_{i=1}^n dp_i\wedge dq_i$, 
$Z_F=\sum_{i=1}^n p_i\partial_{p_{i}}$, $c=1$ and $\psi_F(q_i,p_i)=\sum_{i=1}^n p_i^2$ (perturbed to a Morse function with a perturbation supported on a neighborhood of $\psi_F^{-1}(0)$).
This gives, for each $n\geq1$, another explicit example of tight $(V^{2n+1},\xi)$ such that $\ker\left(\pi_0\diff{V,\xi}\rightarrow\pi_0\diff{V}\right)$ has an infinite cyclic subgroup.
\\
Indeed, each even power of the diffeomorphism $\Psi$ is smoothly isotopic to the identity. This follows from the facts that $T^*\mathbb{T}^n \simeq \mathbb{T}^n\times\real^n$, that $\doubleW\times\cercle\simeq\mathbb{T}^n\times \sphere{n+2}\times\cercle$ and that, for each $\theta\in\cercle$, $\Psi\co\mathbb{T}^n\times \sphere{n+2}\times\cercle\rightarrow\mathbb{T}^n\times \sphere{n+2}\times\cercle$ acts trivially on the first and thirds factors and as a rotation of angle $\theta$ around a given axis on each $\{pt\}\times\sphere{n+2}\times\{\theta\}$; because $\pi_1(SO(m))\simeq\integ_2$ for each $m\geq3$, we can then conclude, as done in the case of \Cref{CorMain}, that $\Psi_c^{2k}$ is smoothly isotopic to the identity, for each $k\neq1$.

\begin{remark}
	\label{rmk:formal}
	To be precise, the actual content of \Cref{ThmMain}, and hence of \Cref{CorMain}, is that $\Psi_c$ is not homotopic to $\Id$ among \emph{formal contactomorphisms} (cf.\ \Cref{rmk:formal_bis}). 
	In analogy with the case of formal isocontact embeddings in \cite[Section 12.3]{EliMisBook}, by \emph{formal contactomorphism} we mean here a pair $(f,F_t)$ where $f\in\diffV$ and $(F_t)_{t\in[0,1]}$ is a path of fiber-wise injective morphisms $TV\to TV$, each lifting $f$, which starts at $F_0=d f$ and ends at a certain $F_1$ preserving both $\xi$ and the conformal symplectic structure $\CS_\xi$, i.e.\ $F_1(\xi)=\xi$ and $(F_1)_*\CS_{\xi} = \CS_{\xi}$. 
	We denote the space of such pairs by $\diffVxif$.
	(Notice that there is an alternative, weaker but equally natural, notion of formal contactomorphism in the literature, which gives a long exact sequence analogous to the one in \Cref{eq:long_exact_seq} but with the space of almost contact structures; see \cite[Appendix A]{CKS18}.)
	In particular, we point out that the question of finding (explicit) examples of contactomorphisms which are of infinite order in $\pi_0(\diffVxi)$ but are formally trivial, i.e.\ trivial in $\pi_0(\diffVxif)$, is still open.
\end{remark}

\section*{Acknowledgements}

This work is part of my PhD thesis \cite{MyPhDThesis}, written at Centre de mathématiques Laurent Schwartz of École polytechnique (Palaiseau, France).
A revised version has been carried out while at Alfred Renyi Institute in Budapest, supported by the grant NKFIH KKP 126683. I am currently supported by the ERC Grant ``Transholomorphic''.

I would like to thank my PhD advisor Patrick Massot, for many useful remarks and comments which greatly improved this manuscript, as well as Eduardo Fernández, for useful discussions on the spaces of formal contactomorphisms and formal Legendrian embeddings.
I am also extremely grateful to Sylvain Courte for useful discussions on deformations of Weinstein structures to Stein structures, which were relevant for the previous version of this manuscript, as well as to Emmanuel Giroux, for letting me include, also in the previous version of this text, some extracts of his private notes \cite{GirouxPersNotes}, concerning deformation of Weinstein structures to almost Stein structures.
Lastly, I wish to thank the anonymous referee for very useful comments and suggestion, that led in particular to many simplifications in the structure of the proof, and thus greatly improved the presentation and the readability of the whole paper.

\section{Preliminaries}

In \Cref{SubSecProdDoubleLiouvDomWithCircle} we describe how, given a Liouville domain $W$, one can naturally construct an explicit Liouville manifold having $\doubleW\times\cercle$ as convex boundary, as well as contactomorphisms of the latter; this will then be used in the case of Weinstein domains in the proof of \Cref{ThmMain}.
\\
\Cref{SubSecFamLegBases} introduces a simple invariant of homotopical nature for contact isotopy (actually, formal contact isotopy) classes of contactomorphisms, namely \emph{families of Lagrangian frames}.


\subsection{Product of doubled Liouville domains and $\cercle$}
\label{SubSecProdDoubleLiouvDomWithCircle}

Let $\What$ be a smooth manifold of dimension $m$ and $f\co \What\rightarrow \real$ be a proper and bounded from below function which is also a \emph{regular equation} of a (cooriented) hypersurface $M\subset \What$, i.e. $f$ is transverse to $0$ and satisfies $M=f^{-1}(0)$ (with coorientation). 
We denote by $W$ the compact submanifold $f^{-1}((-\infty,0])$ of $\What$.
\begin{definition}
	\label{DefDoubleFun}
	We denote by $\doublefunW{f}$ the smooth manifold given by $\{(p,s) \in \What\times\real \,\vert\, s^2+f(p)=0\}$, and we call it \emph{$f$-double} of $W$.
\end{definition}
Notice that $\doublefunW{f}$ is just obtained by gluing two copies of $W$, embedded as graphs of $W$ inside $W\times\real$, one in the region $\{s\geq0\}$ and one in the region $\{s\leq0\}$, along their common boundary, which lies in $\{s=0\}=W\times\{0\}$.
This justifies the nomenclature ``$f$-double''.

We also point out that $\doublefunW{f}$ is indeed a smooth submanifold of $\What\times\real$, because the function $\What\times\real\rightarrow\real$ given by $(p,s)\mapsto s^2 + f(p)$ is transverse to $0$.
\\
Indeed, one can always find a vector field $Z$ on $\What$ which is \emph{boundary-gradient-like} for $f$, i.e which satisfies $df(Z)\geq 0$ everywhere on $\What$ and $df(Z)>0$ along $M=f^{-1}(0)$. 
More precisely, there is a vector field $Z'$ on a neighborhood $U$ of the (cooriented) hypersurface $M$ such that $df(Z')>0$ on $U$, and we can choose $Z$ to be $Z'$ multiplied by a non-negative cutoff function $\chi$ supported in $U$.
Then, $d(s^2+f)(s\partial_s + Z) = 2s^2 + df(Z)>0$ along $\doublefunW{f}\subset \What\times \real$, which shows that $\doublefunW{f}$ is a regular hypersurface.

\begin{notation}
	\label{NotDouble}
If $f\co\What\rightarrow\real$, we denote by $f^\double\co \What\times\real\rightarrow\real$ the function $f^\double(p,s)=s^2+f(p)$.
In particular, if $f$ is an equation of the hypersurface $M\subset \What$, then $f^\double$ defines the hypersurface $\doubleW\subset\What\times\real$, as shown above.
\\
In a similar way, if $Z$ is a vector field on $\What$, we denote by $Z^\double$ the vector field $Z+s\partial_s$ on $\What\times\real_s$;  if $Z$ is boundary-gradient-like for $f$, then so is $Z^\double$ for $f^\double$.
\end{notation}
 
We now describe how $\doublefunW{f}$ actually depends on the choice of $f$. 
For this, we first state a general uniqueness lemma. 
\begin{lemma}
	\label{LemmaGenUniqSmoothDouble}
	Let $Y$ be a smooth manifold and $(g_t)_{t\in[0,1]}$ be a smooth family of functions $g_t\co Y\to\real$ such that $0$ is a regular value for each $g_t$. 
	Additionally, assume that each $N_t\coeq g_t^{-1}(0)$ is compact and that there is a vector field $V$ on $Y$ which is boundary-gradient-like for every $g_t$ with respect to $N_t$ (i.e.\ $dg_t(Z)>0$ along $N_t$).
	Then the flow $\psi_{X_t}^t$ of
	\begin{equation*}
	X_t\coeq\frac{\dot{g_t}}{d g_t(V)}\,  V
	\end{equation*}
	satisfies $\psi_{X_{t}}^t(N_t)=N_0$.
	In particular, all the $N_t$'s are diffeomorphic.
\end{lemma}
\begin{proof}
	The smooth function $G\co Y\times [0,1]\rightarrow \real$, given by $G(y,t)= g_t(y)$, is transverse to $0$: indeed, $dG(V)>0$ along $G^{-1}(0)=\bigcup_t N_t\times\{t\}$.
	Then, $G^{-1}(0)$ is a smooth submanifold of $Y\times [0,1]$, which is moreover contained in $Y\times[0,1]\setminus\{(y,t)\vert dg_t(V)=0\}$.
	Moreover, the (well defined on $\image G$) vector field $-\partial_t + X_t$
	is tangent to $G^{-1}(0)$ and its flow at time $1$ restricts to a diffeomorphism from $G^{-1}(0)\cap \left(Y\times\{1\}\right)=N_1$ to $G^{-1}(0)\cap \left(Y\times\{0\}\right)=N_0$, as desired.
\end{proof}

\noindent
In our setting of the $f$-double, we get the following:
\begin{cor}
	\label{LemmaUniqSmoothDouble}
	Let $f_0,f_1\co \What\rightarrow\real$ be proper, bounded from below and both regular equations for $M$, and $Z$ be a vector field on $\What$ which is boundary-gradient-like for both $f_0$ and $f_1$.
	Then, $\doublefunW{f_0}$ and $\doublefunW{f_1}$ are diffeomorphic.
\end{cor}
\begin{proof}
	Apply \Cref{LemmaGenUniqSmoothDouble} with $Y=\What\times\real_s$, $g_t=tf_1^\double +(1-t)f_0^\double$ and $V=Z$.
\end{proof}

\noindent
Notice that if $f_0,f_1\co \What\rightarrow\real$ are two regular equations for $M$, then there always is a vector field $Z$ on $\What$ which is boundary-gradient-like for both $f_0$ and $f_1$; this can be proven as done above in the case of a single regular equation.
\Cref{LemmaUniqSmoothDouble} then tells that $\doublefunW{f}$ does not depend on $f$, up to diffeomorphism.
By a slight abuse of notation, we may hence write $\doubleW$ and simply talk about the \emph{double} of $W$.
\\

Let now $(\What^{2n},\lambda)$ be a Liouville manifold and denote by $Z$ its Liouville vector field. Consider also a smooth proper function $f\co \What \rightarrow \real$, bounded from below and such that $Z$ is boundary-gradient-like for $f$.
Denote by $W$ the (compact) submanifold $f^{-1}((-\infty,0])$ of $\What$. 
Notice that $(M,\eta=\ker(\lambda\vert_M))$ is a contact manifold and that $(W,\lambda)$ is a Liouville filling of it. 
\\
Consider now the Liouville manifold $(\What\times\real_s\times\cercle_\theta,\lambda+sd\theta)$, where $\real_s$ and $\cercle_\theta$ denote the manifolds $\real$ and $\cercle$ with coordinates $s$ and $\theta$ respectively. 
Notice that the vector field $Z^\double=Z+s\partial_s$ and the function $f^\double$ can naturally be seen on $\What\times\real\times\cercle$. 
Moreover, $Z^\double$ is Liouville for $\lambda + sd\theta$ and transverse to $\doublefunW{f}\times\cercle=\{f^\double=0\}\subset \What\times\real\times\cercle$; in particular, $\alpha_f\coeq (\lambda + sd\theta)\vert_{\doublefunW{f}\times\cercle}$ is a contact form on $\doublefunW{f}\times\cercle$.
In analogy with \Cref{NotDouble}, we will also denote the Liouville form $\lambda+sd\theta$ on $\What\times\real\times\cercle$ by $\lambda^\double$ in the following.

As in the case of the smooth double, we now describe how $(\doublefunW{f}\times\cercle,\ker\alpha_f)$ depends on the specific choice of $f$.
For this, we first state a more general uniqueness property: 
\begin{lemma}
	\label{LemmaGenUniqContStrDouble}
	Let $(Y,\lambda_Y)$ be a Liouville manifold and denote by $V$ the Liouville vector field associated to $\lambda_Y$.
	Consider also a smooth family $(g_t)_{t\in[0,1]}$ of functions $g_t\co Y\to\real$ such that $0$ is a regular value for each $g_t$, $N_t\coeq g_t^{-1}(0)$ is compact, and $V$ is boundary-gradient-like for every $g_t$ with respect to $N_t$;
	in particular, $\xi_t\coeq\ker(\lambda_Y\vert_{N_t})$ is a contact form on $N_t$.
	Then the flow $\psi_{X_t}^t$ of
	\begin{equation*}
	X_t\coeq\frac{\dot{g_t}}{d g_t(V)}\,  V
	\end{equation*}
	satisfies $\psi_{X_{t}}^t(N_t)=N_0$ and $(\psi_{X_{t}}^t\vert_{N_t})_*\xi_t=\xi_0$.
	In particular, all the $(N_t,\xi_t)$'s are contactomorphic.
\end{lemma}
\begin{proof}
	According to \Cref{LemmaGenUniqSmoothDouble}, it's enough to prove that the flow $\psi_{X_t}^t$ of $X_t$ preserves $\ker(\lambda)$ (on its domain of definition).
	An explicit computation shows that $\mathcal{L}_{X_{t}}(\lambda) = \frac{\dot{g_t}}{d g_t(V)} \lambda$. 
	In particular, one can check that $(\psi_{X_t}^t)^{*}\lambda = h_t \lambda$ with 
	\begin{equation*}
	h_t= \exp\left(\int_{0}^{t}\frac{\dot{g_s}}{d g_s(V)}\circ \psi_{X_{s}}^s ds\right) \text{ ,}
	\end{equation*}
	well defined on a neighborhood of $N_0$ and with values in $\real_{>0}$, as desired.
\end{proof}

\noindent
Going back to our particular setting, we get:
\begin{cor}
	\label{LemmaUniqContStrDouble}
	Let $f_0,f_1\co \What\rightarrow\real$ be proper, bounded from below and both regular equations for $M$, and $Z$ be a Liouville vector field on $\What\times\real\times\cercle$ which is boundary-gradient-like for both $f_0$ and $f_1$.
	Then, $(\doublefunW{f_0}\times\cercle,\ker(\alpha_{f_{0}}))$ and $(\doublefunW{f_1}\times\cercle,\ker(\alpha_{f_{1}}))$ are contactomorphic.
\end{cor}
\begin{proof}
	Simply apply \Cref{LemmaGenUniqContStrDouble} with $(Y,\lambda_Y)=(\What\times\real_s\times\cercle_\theta, \lambda + sd\theta)$, $g_t=tf_1^\double +(1-t)f_0^\double$ and $V=Z$.
\end{proof}

\noindent
We will hence drop the $f$ from the notation and just denote $(\doubleW\times\cercle,\ker\alpha)$.

\begin{remark*}
In \cite{GeiSti10}, Geiges and Stipsicz construct, more generally, contact forms on $(W_1\cup_M W_2)\times \cercle$, where $(W_1,\lambda_1)$ and $(W_2,\lambda_2)$ are Liouville domains with the same (strict) contact boundary $(M,\alpha)$.
The contact structure they obtain in the particular case where $W_1=W_2$ and $\lambda_1=\lambda_2$ (and $\partial W_1$ identified with $\partial W_2$ via the identity) is the same, up to isotopy, as the contact structure on $\doubleW\times\cercle$ that we described above. 
\\
Even though the construction described in this paper is less general (as it only covers the case $W_1=W_2$), it has the advantage of involving a natural Liouville filling of the strict contact manifold $(\doubleW\times\cercle,\alpha)$, which will be useful in \Cref{SecProofs}.
We also point out that one cannot always expect a presentation involving a symplectic filling for the construction in \cite{GeiSti10}.
For instance, in the case $W_1=D^2$ and $W_2=\Sigma_g\setminus D^2$, where $\Sigma_g$ is a closed surface with genus $g\neq0$, the theory of convex surfaces by Giroux tells that the contact structure on $(W_1\cup_{\cercle}W_2)\times\cercle$ obtained as in \cite{GeiSti10} is overtwisted: indeed, it is the unique $\cercle$-invariant contact structure on $\Sigma_g\times\cercle$ such that each $\Sigma_g\times\{pt\}$ is a convex surface with dividing set consisting of a homotopically trivial circle.
\end{remark*}

We now describe an explicit natural way to construct (strict) contactomorphisms of $(\doubleW\times\cercle,\xi\coeq\ker\alpha)$.
\\
Consider a smooth action $\rho\co \cercle_\theta\times\What\rightarrow\What$ by diffeomorphisms $\varphi_\theta\coeq \rho(\theta,.)$ of $\What$, each of which preserves both $\lambda$ and $f\co\What\rightarrow\real$.
(Notice that we do not assume that $\rho$ restricts to the trivial action by the identity on $M=\partial W$.) 
Let also $X$ be the infinitesimal generator of $\rho$, i.e.\ the vector field on $\What$ given by 
\begin{equation*}
	X(p)=\left.\frac{d}{d\theta}\right\vert_{\theta=0}\varphi_{\theta}(p) \text{ ,}
\end{equation*}
where we interpret $\cercle=\real/\integ$ and $\varphi_.(p)\co(-\epsilon,\epsilon)\to\What$ as a curve on $\What$ passing through $p$ at time $\theta=0$.
Lastly, consider the diffeomorphism $\Psi\co  \doubleW  \times\cercle \rightarrow\doubleW\times\cercle$ given by the restriction of 
\begin{equation*}
	\widehat{\Psi}\co\What\times\real\times\cercle\rightarrow\What\times\real\times\cercle \text{ ,} \quad
	(p,s,\theta)\mapsto(\varphi_\theta(p),s,\theta)
\end{equation*}
to $\doubleW  \times\cercle$; notice that this restriction is well defined because $\varphi_\theta$ preserves $f$.

\begin{lemma}
	\label{LemmaContactomorphismProdWithCircle}
	The flow $\psi_Y^t$ of the vector field
	\begin{equation*}
	Y = \frac{\lambda(X)}{ d f^\double(Z^\double)}\, \left( 2s \, Z - df(Z)\, \partial_s\right) 
	\end{equation*}
	gives a smooth isotopy $\Psi\circ\psi_Y^t$ from $\Psi = \Psi\circ\psi_Y^0$ to a contactomorphism $\Psi_c\coeq \Psi\circ\psi_Y^1$ of $(\doubleW\times\cercle,\xi=\ker\alpha)$.
\end{lemma}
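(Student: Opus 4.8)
The plan is to realize $\Psi_c=\Psi\circ\psi_Y^1$ as the restriction to $\doubleW\times\cercle$ of a diffeomorphism pulling $\lambda^\double$ back to a positive multiple of itself, by a Moser-type argument in which the flow of $Y$ exactly compensates the failure of $\widehat\Psi$ to preserve $\lambda^\double$. I would first record a few elementary facts. Since $\varphi_\theta^*\lambda=\lambda$, the map $\varphi_\theta$ preserves $d\lambda$ and hence the Liouville field $Z$, and as $\widehat\Psi$ fixes the $s$-coordinate it preserves $Z^\double=Z+s\,\partial_s$; a short computation using $\varphi_\theta^*\lambda=\lambda$ gives
\[
\widehat\Psi^*\lambda^\double=\lambda^\double+g\,d\theta ,\qquad g:=\lambda\!\left(\tfrac{\partial\varphi_\theta}{\partial\theta}\right),
\]
the function appearing in the statement. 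Restricting to $\doubleW\times\cercle$ (which $\widehat\Psi$ preserves because $\varphi_\theta$ preserves $f$), this reads $\Psi^*\alpha=\alpha+g\,d\theta$, so $\Psi$ fails to be a contactomorphism precisely by the $1$-form $g\,d\theta$. Setting $D:=df^\double(Z^\double)=2s^2+df(Z)$ and using $2sZ=2sZ^\double-2s^2\partial_s$, the vector field of the statement rewrites as
\[
Y=\frac{g\,s}{D}\,Z^\double-\frac{g}{2}\,\partial_s ,
\]
whence $df^\double(Y)=\tfrac{g\,s}{D}\,D-\tfrac{g}{2}\cdot 2s=0$: $Y$ is everywhere tangent to the level sets of $f^\double$. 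Since $f\le 0$ on $W$ and $df(Z)>0$ on $M=\partial W$, one checks $D>0$ at every point of $\doubleW\times\cercle$; hence $Y$ is defined near the compact manifold $\doubleW\times\cercle$ and $\psi_Y^t$ restricts, for every $t\in[0,1]$, to a diffeomorphism of $\doubleW\times\cercle$, so that $\Psi\circ\psi_Y^t$ is indeed a smooth isotopy of $\doubleW\times\cercle$ from $\Psi$ to $\Psi_c$.

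The core of the argument is the identity
\[
\mathcal{L}_Y\lambda^\double=\frac{g\,s}{D}\,\lambda^\double-g\,d\theta ,
\]
which I would obtain from $\mathcal{L}_{hV}\eta=h\,\mathcal{L}_V\eta+dh\wedge\iota_V\eta$ together with the Liouville identities $\iota_{Z^\double}\lambda^\double=0$, $\mathcal{L}_{Z^\double}\lambda^\double=\lambda^\double$ and the elementary $\iota_{\partial_s}\lambda^\double=0$, $\mathcal{L}_{\partial_s}\lambda^\double=2\,d\theta$ --- the two summands of $Y$ contributing $\tfrac{g\,s}{D}\lambda^\double$ and $-g\,d\theta$ respectively. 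I would also need the ``degree-one'' relation $dg(Z^\double)=g$, which follows by applying $\mathcal{L}_{Z^\double}$ to $\widehat\Psi^*\lambda^\double=\lambda^\double+g\,d\theta$ and using that $\widehat\Psi$ preserves $Z^\double$, that $\mathcal{L}_{Z^\double}\lambda^\double=\lambda^\double$ and that $\mathcal{L}_{Z^\double}d\theta=0$; since moreover $dg(\partial_s)=0$, this gives $dg(Y)=\tfrac{g\,s}{D}\,g$. Now put $\mu:=\tfrac{g\,s}{D}$ and $\beta_t:=\lambda^\double+t\,g\,d\theta$ for $t\in[0,1]$, so $\beta_0=\lambda^\double$ and $\beta_1=\widehat\Psi^*\lambda^\double$. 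Forming $\mathcal{L}_Y\beta_t+\partial\beta_t/\partial t$, the term $-g\,d\theta$ of $\mathcal{L}_Y\lambda^\double$ is cancelled by $\partial\beta_t/\partial t=g\,d\theta$ (using also $\mathcal{L}_Y d\theta=0$), leaving
\[
\mathcal{L}_Y\beta_t+\frac{\partial\beta_t}{\partial t}=\mu\,\lambda^\double+t\,dg(Y)\,d\theta=\mu\,\beta_t ,
\]
with $\mu$ \emph{independent of $t$}.

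To conclude, I would run the Moser trick on the compact manifold $\doubleW\times\cercle$: since $Y$ is tangent to it, restricting the previous identity there yields $\frac{d}{dt}\big((\psi_Y^t)^*\beta_t\big)=(\mu\circ\psi_Y^t)\,(\psi_Y^t)^*\beta_t$, a linear ODE for the $1$-form $(\psi_Y^t)^*\beta_t$ with initial value $\beta_0|_{\doubleW\times\cercle}=\alpha$; hence $(\psi_Y^t)^*\beta_t=e^{\rho_t}\alpha$ with $\rho_t:=\int_0^t\mu\circ\psi_Y^\tau\,d\tau$. Taking $t=1$ and using that $\beta_1=\widehat\Psi^*\lambda^\double$ restricts to $\Psi^*\alpha$, this reads $\Psi_c^*\alpha=(\psi_Y^1)^*\Psi^*\alpha=e^{\rho_1}\alpha$ with $e^{\rho_1}>0$, so $\Psi_c^*\xi=\xi$ and $\Psi_c$ is the desired contactomorphism of $(\doubleW\times\cercle,\xi)$. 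The one genuinely delicate point is the cancellation in the last display: it is what forces the precise coefficients in $Y$, and it rests on the identities $\mathcal{L}_{Z^\double}\lambda^\double=\lambda^\double$ and $dg(Z^\double)=g$; everything else is routine bookkeeping.
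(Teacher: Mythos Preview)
Your proof is correct and follows essentially the same Moser/Gray path as the paper: both use the interpolation $\beta_t=\lambda^\double+t\,g\,d\theta$ (the paper's $\lambda_t^\double$), verify the key identity $dg(Z)=g$, and check that the flow of $Y$ carries $\ker\beta_1$ back to $\ker\beta_0$. The only cosmetic difference is that you run the argument ambiently on $\What\times\real\times\cercle$ and obtain the stronger conformal identity $(\psi_Y^t)^*\beta_t=e^{\rho_t}\lambda^\double$ before restricting, whereas the paper restricts first to $\doubleW\times\cercle$, separately checks that each $\alpha_t$ is contact (by showing $Z^\double$ is Liouville for $\lambda_t^\double$), and then verifies the Gray conditions $\alpha_t(Y)=0$ and $\iota_Y d\alpha_t\vert_{\ker\alpha_t}=-\dot\alpha_t\vert_{\ker\alpha_t}$; your packaging makes that separate check unnecessary.
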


Notice that $Y$, well defined as a vector field on $\What\times\real\times\cercle\setminus \{s=0,df(Z)=0\}$, is indeed tangent to (the $\doubleW$ factor of) $\doubleW\times\cercle$.

\begin{proof}[Proof (\Cref{LemmaContactomorphismProdWithCircle})]	
	For notational ease, 	let $h\coeq\lambda(X)$, defined on $\What$.
	Notice that $\lie_{X}\lambda = 0$, as $\varphi_\theta^*\lambda=\lambda$ for each $\theta\in\cercle$. 
	In particular, $dh = - \iota_{X}d\lambda$ and,
	evaluating on the Liouville vector field $Z$, one gets $dh(Z)=h$.
	
	We now want to compute the pullback $\Psi^*\left[\lambda^\double\vert_{\doubleW\times\cercle}\right]$.
	\\
	First, notice that $(\widehat{\Psi}^*\lambda^\double)(\partial_s)=\lambda^\double_{\widehat{\Psi}(.)}(d\widehat{\Psi}(\partial_s))=\lambda^\double_{\widehat{\Psi}(.)}(\partial_s)=0$.
	Moreover, for any vector field $V$ on $\What\times\real_s\times\cercle_\theta$ tangent to the $\What$ factor, one can compute
	\begin{align*}
		(\widehat{\Psi}^*\lambda^\double)_{(p,s,\theta)}(V(p,s,\theta)) & = \lambda^\double_{\widehat{\Psi}(p,s,\theta)}(d_{(p,s,\theta)}\widehat{\Psi}(V(p,s,\theta))) \\
		& = \lambda_{\varphi_\theta(p)}(d_{p}\varphi_\theta(V(p,s,\theta))) \\
		& = (\varphi_\theta^*\lambda)_p(V(p,s,\theta)) \\
		& = \lambda_p(V(p,s,\theta))\text{ .}
	\end{align*}
	Lastly, one can similarly compute:
	\begin{align*}
	(\widehat{\Psi}^*\lambda^\double)_{(p,s,\theta)}(\partial_\theta) & = \lambda^\double_{\widehat{\Psi}(p,s,\theta)}(d_{(p,s,\theta)}\widehat{\Psi}(\partial_\theta)) \\
	& = (\lambda +  s d\theta)_{(\varphi_\theta(p),s,\theta)}(X(\varphi_\theta(p)) + \partial_\theta) \\
	& = \lambda_{\varphi_\theta(p)}(X(\varphi_\theta)) + s \\
	& \overset{(*)}{=} \lambda_p(X(p)) + s\text{ ,}
	\end{align*}
	where $(*)$ used the fact that $\varphi_\theta^*(\iota_X\lambda)= \iota_{(\varphi_\theta^*X)}(\varphi_\theta^*\lambda) = \iota_X \lambda$, as $\varphi_\theta$ preserves both $X$ and $\lambda$ for each $\theta\in\cercle$.
	In conclusion, $\widehat{\Psi}^*\lambda^\double=\lambda +\left(s+h\right)d\theta$, hence
	\begin{equation*}
		\Psi^*\left[\lambda^\double\vert_{T(\doubleW\times\cercle)}\right]=\left[\lambda +\left(s+h\right)d\theta\right]\vert_{T(\doubleW\times\cercle)} \text{ .}
	\end{equation*}
	
	We now describe a simple homotopy of contact forms between $\alpha$ and $\Psi^*\alpha$.
	\\
	For all $t\in[0,1]$, denote the $1$-form $\lambda + (s+th)d\theta$ on $\What\times\real\times\cercle$ by $\lambda^\double_t$, and its restriction to $\doubleW\times\cercle$ by $\alpha_t$. 
	An explicit computation shows that, for each $t\in[0,1]$,  $\lambda^\double_t$ is a Liouville form with corresponding Liouville vector field $Z^\double$ (independent of $t$), which is hence transverse to $\doubleW\times\cercle$.
	In particular, for each $t\in[0,1]$, $\alpha_t$ is a contact form on $\doubleW\times\cercle$.

	Lastly, we need to prove that this homotopy of contact forms $\alpha_t$ is realized by the vector field $Y$ as in the statement.
	According to (the proof of) Gray's theorem, the flow of the (a priori time-dependent) vector field $X_t$ such that $\alpha_t(X_t)=0$ and $\iota_{X_{t}}d\alpha_t\vert_{\ker\alpha_t}=-\dot{\alpha}_t\vert_{\ker\alpha_t}$ gives an isotopy that pulls back $\ker\alpha_t$ to $\ker\alpha_0$ (see for instance the discussion after \cite[Theorem 2.2.2]{Gei08}). 
	It's hence enough to show that the vector field $Y$ in the statement verifies these two conditions.
	\\
	An explicit computation gives that $d f^\double (Y) = 0$ and $\lambda^\double_t(Y)=0$, i.e. that $Y\in \ker\alpha_t=\ker\lambda^\double_t\cap T(\doubleW\times\cercle)$.
	Moreover, we can compute
	\begin{align*}
		\iota_Y d\lambda^\double_t  & = \iota_Y(d\lambda + ds\wedge d\theta + tdh \wedge d\theta) \\
		& = \frac{h}{d f^\double(Z^\double)}\left[2s\lambda + 2ts \,d h(Z)\, d\theta - df(Z)d\theta\right] \\
		& \overset{(i)}{=} \frac{h}{d f^\double(Z^\double)}\left[2s \lambda^\double_t - 2s^2 d\theta - df(Z)d\theta \right]\\
		& \overset{(ii)}{=}  \frac{2s h}{d f^\double(Z^\double)} \lambda^\double_t -\frac{d}{dt}\lambda^\double_t \text{ ,}
	\end{align*}
	where for $(i)$ we used that $dh(Z)=h$ and for $(ii)$ we used that $df^\double (Z^\double)= 2s^2 + df(Z)$ and $\frac{d}{dt}\lambda^\double_t=h d\theta$. 
	In particular $\iota_{Y}d\alpha_t\vert_{\ker\alpha_t}=-\dot{\alpha}_t\vert_{\ker\alpha_t}$, as desired.
\end{proof}


\subsection{Families of Lagrangian frames}
\label{SubSecFamLegBases}  

Let $V$ be a smooth $(2n+1)$-manifold and $\xi$ a contact structure on $V$.
Given a compact manifold $Y^m$, we call \emph{family of Lagrangian frames} of $\xi$ indexed by $Y$ the data of a smooth map $\gamma\co Y\rightarrow V$ and, for $j=1,\ldots,n$, of smooth maps $X_j\co Y \rightarrow \xi$ such that the following diagram commutes
\begin{equation*}
\begin{tikzcd}
& \xi \ar[d] \\
Y \ar["\gamma", r] \ar["X_j",ru] & V 
\end{tikzcd}
\end{equation*}
and such that, for each $q\in Y$, the $X_1(q),\ldots, X_n(q) $ are $\real$-linearly independent and generate a Lagrangian subspace of $(\xi_p, (\CS_\xi)_p)$.
Here, $\CS_\xi$ is the natural conformal symplectic structure on $\xi$; 
in particular, $(\CS_\xi)_p$ is a conformal class of symplectic alternating forms on $\xi_p$ and, hence, has a well defined class of (isotropic and) Lagrangian subspaces.
In the following, we denote families of Lagrangian frames $(\gamma,X_1,\ldots,X_n)$ simply by $\family$.

We point out that if $f\co (V,\xi) \rightarrow (V,\xi)$ is a contactomorphism, then $f_*\family\coeq (f\circ\gamma, df (X_1),\ldots, df (X_n))$ is also a $Y$-family of Lagrangian frames of $\xi$: indeed, $f$ preserves the conformal symplectic structure $\CS_\xi$ on $\xi$. 
\\
Moreover, if $f_t\co (V,\xi) \rightarrow (V,\xi)$ is a contact-isotopy from $f_0=\Id$ to $f_1=f$, then $(f_t)_*\family$ is a path of $Y$-families of Lagrangian frames of $\xi$ from $\family$ to $f_*\family$.
In other words, we have the following obstruction to contact-isotopies:
\begin{lemma}
	\label{LemmaObstr}
	Let $f\co (V,\xi) \rightarrow (V,\xi)$ be a contactomorphism, and $\family$ a $Y$-family of Lagrangian frames  for $\xi$.
	If $f_*\family$ is not homotopic (among families of Lagrangian frames) to $\family$, then $f$ is not contact-isotopic to the identity.
\end{lemma}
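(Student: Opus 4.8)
The plan is to argue by contraposition, making precise the two observations recorded just before the statement. So I would suppose that $f$ \emph{is} contact-isotopic to the identity, fix a contact isotopy $f_t\co (V,\xi)\rightarrow(V,\xi)$, $t\in[0,1]$, with $f_0=\Id$ and $f_1=f$, and from it produce a homotopy of $Y$-families of Lagrangian basis between $\family$ and $f_*\family$; this contradicts the hypothesis.

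First I would check that, for each fixed $t\in[0,1]$, the tuple $(f_t)_*\family\coeq(f_t\circ\gamma,\, df_t(X_1),\ldots,df_t(X_n))$ is again a $Y$-family of Lagrangian basis of $\xi$. Commutativity of the required triangle is immediate from the chain rule together with the fact that $f_t$ preserves $\xi$: this gives $df_t(X_j(q))\in\xi_{f_t(\gamma(q))}$ over the base point $f_t(\gamma(q))=(f_t\circ\gamma)(q)$. That $df_t(X_1(q)),\ldots,df_t(X_n(q))$ remain $\real$-linearly independent is clear since $df_t$ restricts to a linear isomorphism on each fibre of $\xi$, and that they span a Lagrangian subspace of $(\xi_{f_t(\gamma(q))},(\CS_\xi)_{f_t(\gamma(q))})$ follows because a contactomorphism preserves the conformal symplectic structure $\CS_\xi$ and hence carries Lagrangian subspaces to Lagrangian subspaces. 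This is exactly the remark that $f_*\family$ is a family of Lagrangian basis whenever $f$ is a contactomorphism, applied to each $f_t$ individually.

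Next I would observe that $t\mapsto (f_t)_*\family$ is in fact a path of $Y$-families of Lagrangian basis, i.e.\ that $(t,q)\mapsto\big((f_t\circ\gamma)(q),\, df_t(X_1(q)),\ldots,df_t(X_n(q))\big)$ is smooth on $[0,1]\times Y$; this is immediate from smoothness of the isotopy $f_t$ in $(t,p)$ and of $\gamma,X_1,\ldots,X_n$. Evaluating at the endpoints, $f_0=\Id$ gives $(f_0)_*\family=\family$ and $f_1=f$ gives $(f_1)_*\family=f_*\family$, so $\family$ and $f_*\family$ are homotopic among $Y$-families of Lagrangian basis, the desired contradiction.

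There is no genuinely hard step here: the content is entirely in unwinding the definitions. If anything deserves care, it is the claim that a contactomorphism sends Lagrangian subspaces of $(\xi,\CS_\xi)$ to Lagrangian subspaces, which I would justify by noting that $f_t^*(\CS_\xi)=\CS_\xi$ as \emph{conformal} symplectic structures — for a local defining form $\alpha$ one has $f_t^*\alpha=h_t\,\alpha$ for a positive function $h_t$, so $f_t^*(d\alpha|_\xi)$ and $d\alpha|_\xi$ are proportional — and the isotropic/Lagrangian condition, being invariant under rescaling the symplectic form, is therefore preserved.
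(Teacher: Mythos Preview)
Your proof is correct and is exactly the argument the paper has in mind: the paper does not give a separate proof of this lemma but states it as a direct consequence of the two observations made immediately before (that $f_*\family$ is again a $Y$-family of Lagrangian basis, and that a contact isotopy $f_t$ yields a path $(f_t)_*\family$ from $\family$ to $f_*\family$), which is precisely what you spell out in detail.
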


What we will actually use in the proof of \Cref{ThmMain} is a stabilized version of \Cref{LemmaObstr}; here are the details.

Let $\varepsilon_\real$ be the trivial real line bundle $V\times \real \rightarrow V$, and suppose that there exists a conformal symplectic structure $\CS_{\xistab}$ on the real vector bundle $\xistab\coeq\xi\oplus\varepsilon_\real^{2k}$ extending $\CS_{\xi}$ on $\xi$ and such that $\varepsilon_\real^{2k}$ coincides with the $\CS_{\xistab}$-orthogonal complement of $\xi$ in $\xistab$;
here, $\varepsilon_\real^{2k}$ denotes the direct sum of $\varepsilon_\real$ with itself $2k$ times.
\\
Assume also that there is a complex structure $J$ on $\xistab$, which is tamed by $\CS_{\xistab}$ and such that there is an isomorphism of complex vector bundles $\Phi\co (\xistab,J) \xrightarrow{\sim} \varepsilon_\compl^{n+k}$, 
where $\varepsilon_\compl$ is the trivial complex line bundle $V\times \compl \rightarrow V$. 
\\
Notice that the property that $(\xistab,J)$ is trivial is independent of the specific choice of $J$ tamed by $\CS_{\xistab}$. 
Indeed, the space of complex structures on $\xistab$ which are tamed by $\CS_{\xistab}$ is contractible, hence $(\xi,J)$ and $(\xi,J')$ are isomorphic as complex vector bundles if $J,J'$ are both tamed by it.

Let now $\family = (\gamma,X_1,\ldots,X_n)$ be a $Y$-family of Lagrangian frames for $\xi$.
Fix also a \emph{global} Lagrangian frame $(e_1,\ldots,e_k)$ for $(\varepsilon_\real^{2k},\CS_{\xistab}\vert_{\varepsilon_\real^{2k}})$ (i.e. a $V$-family of Lagrangian frames lifting the identity map $V\to V$).
Then, the stabilization
\begin{equation*}
\family_{stab}\coeq(\gamma,X_1,\ldots,X_n,e_1\circ\gamma,\ldots,e_k\circ\gamma)
\end{equation*} 
is a $Y$-family of Lagrangian frames for $(\xistab,\CS_{\xistab})$.
In the following, we say that the family $\family_{stab}$ is the \emph{stabilization} of $\family$ \emph{via $(e_1,\ldots,e_k)$} (sometimes omitting the sections $(e_1,\ldots,e_k)$ of $\varepsilon_\real^{2k}$ if there is no ambiguity), and denote it more concisely by $\family\oplus (e_1,\ldots,e_k)$.

As $J$ is tamed by $\CS_{\xistab}$, for each $y\in Y$ the real subspace spanned by 
\[\left(X_1(y),\ldots, X_n(y), e_1\circ\gamma(y),\ldots,e_k\circ\gamma(y)\right)\]
is totally real in $((\xistab)_{\gamma(y)},J_{\gamma(y)})$.
Hence, the family of Lagrangian frames $\family_{stab}$ gives, via $\Phi$, a \emph{$Y$-family of complex frames} $\Phi_*\family_{stab}$ for $\varepsilon_\compl^{n+k}$, i.e.\ for all $y\in Y$ one has 
\begin{equation*}
	\left<\,\Phi(X_1(y)),\ldots, \Phi (X_n(y)), \Phi(e_1\circ\gamma(y)),\ldots,\Phi(e_k\circ\gamma(y))\,\right>_{\compl}=(\varepsilon_\compl^{n+k})_{\gamma(y)} = \compl^{n+k} \text{ .}
\end{equation*}
\noindent
In particular, considering, for each $y\in Y$, the linear endomorphism of $(\varepsilon_\compl^{n+k})_{\gamma(y)}=\compl^{n+k}$ obtained by sending the canonical complex framing of $\compl^{n+k}$ over $\gamma(y)$ to the one given by $\Phi_*\family_{stab}$, we obtain a smooth map $M\co Y \rightarrow\GL_{n+k}(\compl)$. 
In the following, we will say that the map $M$ is the \emph{$Y$-family of (invertible) matrices} associated (via $\Phi$) to $\family_{stab}$. 

We also point out that, given a contactomorphism $f$ of $(V,\xi)$, the family
\begin{equation*}
	(f_*\family)_{stab}=\left(\,f\circ\gamma,\, df \left(X_1\right),\ldots,\,df \left(X_n\right),\,e_1\circ f \circ \gamma,\,\ldots,\,e_k\circ f \,\circ \gamma\right)
\end{equation*}
gives another $Y$-family of invertible matrices, denoted $f_* M\co Y \rightarrow\GL_{n+k}(\compl)$.
As this can be done parametrically, we get this stabilized version of \Cref{LemmaObstr}: 
\begin{lemma}
	\label{LemmaObstrStabTriv}
	Let $(V^{2n+1},\xi)$ be a contact manifold and $\CS_{\xistab}$ a conformal symplectic structure on $\xistab=\xi\oplus\varepsilon_\real^{2k}$ that extends $\CS_{\xi}$ on $\xi$ and such that $\varepsilon_\real^{2k}$ is the $\CS_{\xistab}$-orthogonal complement of $\xi$ in $\xistab$.
	Consider also $J$ an almost complex structure on $\xistab$ which is tamed by $\CS_{\xistab}$ and such that $(\xistab,J)$ is trivial, via an isomorphism $\Phi\co (\xistab,J) \rightarrow \varepsilon_\compl^{n+k}$ of complex vector bundles over $V$.
	Let finally $f\co (V,\xi) \rightarrow (V,\xi)$ be a contactomorphism, $\family=(\gamma,X_1,\ldots,X_n)$ a $Y$-family of Lagrangian frames for $\xi$ and $(e_1,\cdots,e_k)$ a global Lagrangian frame $\varepsilon_\real^{2k}$.
	\\
	If the $Y$-family of matrices associated via $\Phi$ to the stabilization $(f_*\family)_{stab}$ is not homotopic, as map $Y\rightarrow \GL_{n+k}(\compl)$, to the $Y$-family of matrices associated via $\Phi$ to the stabilization $\family_{stab}$, then $f$ is not contact-isotopic to the identity.
\end{lemma}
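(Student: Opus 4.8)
The plan is to argue by contraposition, in exact parallel with the proof of \Cref{LemmaObstr}, simply carrying the stabilization data along throughout. Suppose that $f$ is contact-isotopic to the identity, and fix a contact-isotopy $(f_t)_{t\in[0,1]}$ of $(V,\xi)$ with $f_0=\Id$ and $f_1=f$. As recalled just before \Cref{LemmaObstr}, since each $f_t$ preserves $\CS_\xi$, the assignment $t\mapsto (f_t)_*\family=\bigl(f_t\circ\gamma,\,df_t(X_1),\ldots,df_t(X_n)\bigr)$ is a continuous path of $Y$-families of Lagrangian basis for $\xi$, joining $\family$ at $t=0$ to $f_*\family$ at $t=1$.

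Next I would apply, for all $t\in[0,1]$ simultaneously, the $(e_1,\ldots,e_k)$-stabilization followed by the fixed bundle isomorphism $\Phi$. Concretely, set
\[
\family^t_{stab}\coeq\bigl((f_t)_*\family\bigr)_{stab}=\bigl(f_t\circ\gamma,\,df_t(X_1),\ldots,df_t(X_n),\,e_1\circ f_t\circ\gamma,\ldots,e_k\circ f_t\circ\gamma\bigr),
\]
and let $M_t\co Y\rightarrow\GL_{n+k}(\compl)$ be the associated $Y$-family of matrices, i.e. $M_t(q)$ is the endomorphism of $\compl^{n+k}$ sending the canonical basis to the pointwise basis determined at $q$ by the image of $\family^t_{stab}$ under $\Phi$. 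The one thing that genuinely needs checking is that $(t,q)\mapsto M_t(q)$ is continuous; but this is immediate, since $\xi$, $\varepsilon_V^k$, $\varepsilon_V^{n+k}$ and $\Phi$ are fixed over $V$, the sections $e_i$ are fixed, and $f_t$, $df_t$, $\gamma$ and the $X_j$ all depend continuously on their arguments, so $M_t(q)$ is obtained from these by composition and by reading off coordinates in the fixed trivialization $\varepsilon_V^{n+k}$. Hence $(M_t)_{t\in[0,1]}$ is a homotopy, through maps $Y\rightarrow\GL_{n+k}(\compl)$, from $M_0$ to $M_1$.

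It then remains to identify the two endpoints: by construction $M_0$ is the $Y$-family of matrices associated via $\Phi$ to the stabilization $\family_{stab}=\family\oplus(e_1,\ldots,e_k)$, while $M_1$ is the $Y$-family of matrices associated via $\Phi$ to $(f_*\family)_{stab}$, namely $f_*M$. Thus $f_*M$ is homotopic to $M$ as a map $Y\rightarrow\GL_{n+k}(\compl)$, contradicting the hypothesis of the lemma; therefore $f$ is not contact-isotopic to the identity, as claimed.

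\textbf{On the main obstacle.} There is essentially no hard step here: the statement is a formal strengthening of \Cref{LemmaObstr} via the evident naturality of the stabilize-then-$\Phi$ construction. The only point deserving a line of care is the continuity assertion above — namely, ensuring that a \emph{path} of $Y$-families of Lagrangian basis is sent to a genuine homotopy of matrix-valued maps, and not merely to a $t$-indexed collection of homotopy classes; as noted, this follows at once from the fact that all the bundle data ($\xi$, $\Phi$, the sections $e_i$) is fixed and only the contactomorphisms $f_t$ vary.
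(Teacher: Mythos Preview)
Your proof is correct and follows exactly the approach the paper intends: the paper does not give a separate proof of this lemma but simply remarks, just before its statement, that the stabilize-then-$\Phi$ construction ``can also be done parametrically, analogously to \Cref{LemmaObstr}''. You have written out precisely that parametric argument, including the continuity check the paper leaves implicit.
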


\begin{remark}
	\label{rmk:formal_bis}
	Notice that the invariant just described is of a completely formal nature.
	More precisely, given a reference $Y$-family of Lagrangian frames $\family=(\gamma,X_1\ldots,X_n)$, the space $\diffVxif$ of formal contactomorphisms $(f,F_t)$ acts on the space of families of Lagrangian frames by pushforward (at time $1$) as follows: 
	\begin{equation*}
		(f,F_t)_*\family = (f\circ\gamma, F_1(X_1),\ldots,F_1(X_n)) \text{ .}
	\end{equation*}
	Moreover, under the same hypothesis as in \Cref{LemmaObstrStabTriv} but with $(f,F_t)\in\diffVxif$ instead of $f\in\diffVxi$, one can prove that if the $Y$-family of matrices associated to $((f,F_t)_*\family)_{stab}$ is not homotopic to the one associated to $\family_{stab}$, then $[(f,F_t)]$ is not trivial in $\pi_0(\diffVxif)$.
	
	\noindent
	As a consequence, such invariant detects non--triviality of contactomorphisms directly in $\pi_0(\diffVxif)$ (cf.\ \Cref{rmk:formal}), hence is not able to detect \emph{rigid} contactomorphisms, i.e.\ those which are non--trivial in $\pi_0(\diffVxi)$ but trivial in $\pi_0(\diffVxif)$.
\end{remark}


\section{Contact mapping classes of infinite order}
\label{SecProofs}

The aim of this section is to prove \Cref{ThmMain}, stated in the introduction.
We briefly recall the setting, reinterpreting it in terms of the construction of the double of a Liouville manifold from \Cref{SubSecProdDoubleLiouvDomWithCircle} and specifying explicitly who is the candidate contactomorphism $\Psi_c$. 

The starting data is that of a Weinstein manifold $(F^{2n-2},\omega_F,Z_F,\psi_F)$ such that there exists a $J_F$ tamed by $\omega_F$ for which $(TF,J_F)$ is trivial as complex vector bundle over $F$.
We then consider the stabilization $(F\times\compl,\omega=\omega_F\oplus\omega_0,Z=Z_F+Z_0,\psi=\psi_F+\abs{.}^2_\compl)$, with $\omega_0=rdr\wedge d\varphi$ and $Z_0 = \frac{1}{2}r\partial_r$, where $z=re^{i\varphi}\in\compl$. 
Then, for a regular value $c> \min \psi_F$ of $\psi\coeq\psi_F+\abs{.}^2_\compl$, we denote $W=\psi^{-1}((-\infty,c])$.
\\
One then considers the Weinstein manifold $(F\times\compl\times\real_s\times\cercle_\theta,\omega^\double,Z^\double,\psi^\double)$, where $\omega^\double = \omega_F + \omega_0 + ds\wedge d\theta$, $Z^\double = Z_F + Z_0 + s\partial_s$ and $\psi^\double(q,z,s,\theta) = \psi(q,z) + s^2$.  
Let also $\lambda=\iota_Z\omega$ on $\What=F\times\compl$ and $\lambda^\double = \lambda + s d\theta$ on $\What\times\real\times\cercle$.
The manifold of interest is then $(\psi^\double)^{-1}(c)=\doublefunW{\psi-c}\times\cercle$, equipped with the contact structure $\xi=\ker\alpha_{\psi-c}$, where $\alpha_{\psi-c} = \lambda^\double\vert_{(\psi^\double)^{-1}(c)}$.
We also have a simple diffeomorphism  $\Psi$ of $\doubleW\times\cercle$, given by the restriction of the diffeomorphism $\widehat{\Psi}$ of $F\times\compl\times\real\times\cercle$ defined as $\widehat{\Psi}(q,z,s,\theta)= (q,e^{i\theta}z,s,\theta)$.
\\
What we are going prove is that $\Psi$, is isotopic, via the flow $\psi_{Y}^t$ of the vector field $Y$ described in \Cref{LemmaContactomorphismProdWithCircle}, to a contactomorphism $\Psi_c=\Psi \circ \psi_Y^1$ which is of infinite order in $\pi_0\left(\diff{\doublefunW{\psi-c}\times\cercle,\alpha_{\psi-c}}\right)$.

\paragraph*{A complex trivialization.}
We start by extending the trivialization of $(TF,J_F)$ to a natural complex trivialization of the tangent bundle of $F\times\compl\times\real\times\cercle$.

More precisely, the almost complex structure $J\coeq J_F\oplus i$ on $F\times\compl$ can be further extended to $J^\double$ on $F\times\compl\times\real_s\times\cercle_\theta$ by defining $J^\double(\partial_s)\coeq \partial_\theta$ on $T(\real_s\times\cercle_\theta)$. 
Notice in particular that $J^\double$ is tamed by $\omega^\double$.
Let now $\varepsilon_F^{n-1}$ be the trivial complex vector bundle $(F\times\compl^{n-1},J_{std})$ over $F$ and denote $\nu\co(TF,J_F)\xrightarrow{\sim} \varepsilon_F^{n-1}$ the complex trivialization from \Cref{ThmMain}. 
Then, $\nu$ extends to a trivialization 
\begin{equation}
\label{EqTriv}
\mu\co\left(T\left(F\times\compl\times\real\times\cercle\right),J^\double\right)\xrightarrow{\sim}\varepsilon_{F\times\compl\times\real\times\cercle}^{n+1}
\end{equation}
such that, for each $(q,z,s,\theta)\in F\times\compl\times\real\times\cercle$, one has:
\begin{itemize}
	\item the following diagram commutes
	\begin{equation*}
	\begin{tikzcd}
	(T_qF,J_F) \ar["\nu_q",d] \ar["i",r]& \left(T_{(q,z,s,\theta)}\left(F\times\compl\times\real\times\cercle\right),J^\double\right) \ar["\mu_{(q,z,s,\theta)}", dd] \\
	(\varepsilon_F^{n-1})_q=\compl^{n-1} \ar["j", d] &\\
	(\varepsilon_F^{n+1})_{q}=\compl^{n+1} \ar["\Id", r] &
	(\varepsilon_{F\times\compl\times\real\times\cercle}^{n+1})_{(q,z,s,\theta)}=\compl^{n+1}
	\end{tikzcd}
	\end{equation*}
	where $i$ and $j$ are the natural inclusions given by $T_q F = T_qF \oplus\{(0,0,0)\}\subset T_{(q,z,s,\theta)}\left(F\times\compl\times\real\times\cercle\right)$ and $\compl^{n-1}=\compl^{n-1}\times\{(0,0)\}\subset \compl^{n+1}$;
	\item $\mu_{(q,z,s,\theta)}(\partial_x)=(0,\ldots,0,1,0)\in \compl^{n+1}$, 
	where we use here coordinates $(x,y)$ on the factor $\compl$ of  $F\times\compl\times\real\times\cercle$,
	\item $\mu_{(q,z,s,\theta)}(\partial_s)=(0,\ldots,0,1)\in \compl^{n+1}$, where $s$ is the coordinate on the factor $\real$ of $F\times\compl\times\real\times\cercle$.
\end{itemize}

\paragraph*{Flattening the hypersurface.}
We now need to modify the equation $\psi^\double - c = 0$ defining $\doubleW\times\cercle$, in order to ``flatten'' a portion of this hypersurface.
The reason for such modification is that we want to understand the flow of the vector field $Y$ from \Cref{LemmaContactomorphismProdWithCircle} in order to use the invariant defined in \Cref{SubSecFamLegBases} (cf. \Cref{rmk:flattening}): having partially flattened the hypersurface will allow us to give an explicit and easy expression for such flow, at least in the flattened region.
Of course, we also need to prove that one can indeed do such modification without altering the conclusions of \Cref{ThmMain}; this is done in \Cref{RmkChoiceRegEq} below. 

Let then $a >0 $ be very small; in particular, it can be chosen to be smaller than $\frac{c- \min(\psi_F)}{2}>0$ (this parameter will intervene later in the proof) and such that $\psi$ has no critical value between $c-2a$ and $c$.  
Consider also a non-decreasing smooth cut-off function $\chi\co \real\rightarrow[-1,1]$, equal to $1$ exactly on $(2a,+\infty)$, equal to $-1$ exactly on $(-\infty,-2a)$, and such that $\chi(x)=x$ for $x\in(-a,a)$.
Then, the function $f\co F\times\compl\rightarrow \real$ defined by $f\coeq \chi (\psi-c)$ is a regular equation of $M=f^{-1}(0)=\psi^{-1}(c)$; in particular, $Z^\double=Z+s\partial_s$ on $\What\times\real_s$ is transverse to $\doublefunW{f}$ too.
See \Cref{FigTwoDoubles}.
\begin{figure}[htb]
	\centering
	\def\svgwidth{180pt}
	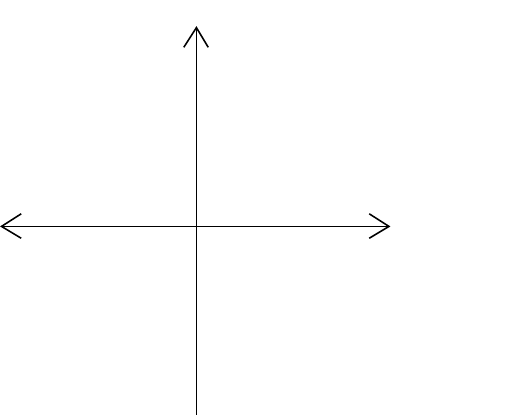
	\caption{$\doublefunW{f}$ and $\doublefunW{\psi-c}$ inside $\What\times\real_s$ and the vector field $Z^\double$, transverse to both; $W_-\coeq\doublefunW{f}\cap\{s=-1\}$ that appears in \ref{Step1} is also represented.}
	\label{FigTwoDoubles}
\end{figure}

As announced above, one can indeed prove \Cref{ThmMain} using the equation $f$ instead of $\psi-c$ without loss of generality:
\begin{lemma}
	\label{RmkChoiceRegEq}
	If the conclusion of \Cref{ThmMain} holds with the special choice of equation $f$ for $\doublefunW{f}\subset F\times\compl\times\real$, then it holds also for $\doublefunW{\psi-c}$ defined by $\psi-c$ (i.e. as in the statement of \Cref{ThmMain}).
\end{lemma}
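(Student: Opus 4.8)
The plan is to transport the conclusion of \Cref{ThmMain} from the double defined by $f$ to the double defined by $\psi-c$ along the contactomorphism supplied by \Cref{LemmaUniqContStrDouble}, after first checking that this contactomorphism is compatible with the rotation diffeomorphism $\Psi$. Write $g\coeq\psi-c$. Both $f=\chi(\psi-c)$ and $g$ are regular equations of $M=\psi^{-1}(c)$ inducing the same coorientation, and the Liouville field $Z=Z_F+Z_0$ of $\lambda$ on $F\times\compl$ is boundary-gradient-like for both: for $g$ this is part of the hypotheses of \Cref{ThmMain} (the value $c$ is regular for $\psi$ and $Z$ is gradient-like for $\psi$), while for $f$ it follows from $df(Z)=\chi'(\psi-c)\,d\psi(Z)$ together with $\chi'\geq0$ everywhere and $\chi'\equiv1$ near $0$. (Strictly, $f$ is only bounded, not proper on all of $F\times\compl$; but the doubles $\doublefunW{f}$, $\doublefunW{g}$ and the intermediate ones $\doublefunW{f_t}$, for $t\in[0,1]$ and $f_t=tf+(1-t)g$, are all compact --- being contained in $W\times[-C,C]$ for a fixed $C$ --- so the arguments proving \Cref{LemmaUniqSmoothDouble} and \Cref{LemmaUniqContStrDouble} apply unchanged.) Applying \Cref{LemmaUniqContStrDouble} with $f_1=f$ and $f_0=g$ then produces a contactomorphism
\[
\Phi\co\big(\doublefunW{f}\times\cercle,\ker\alpha_f\big)\longrightarrow\big(\doublefunW{g}\times\cercle,\ker\alpha_g\big),
\]
namely the time-$1$ map of the flow of $-\partial_t+X_t$, with $X_t=\frac{f-g}{d f_t^\double(Z^\double)}\,Z^\double$.

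The main point, which I expect to be the only step requiring actual computation, is that $\Phi$ is equivariant for the rotation. Let $\widehat{\Psi}(q,z,s,\theta)=(q,e^{i\theta}z,s,\theta)$ be the ambient diffeomorphism whose restriction defines $\Psi$ in both models (it preserves $\doublefunW{f}\times\cercle$ and $\doublefunW{g}\times\cercle$ because $z\mapsto e^{i\theta}z$ preserves $\abs{z}^2$, hence $\psi$, hence $f$ and $g$). The same invariances show that $\widehat{\Psi}$ preserves $\psi$ --- hence $f$, $g$, $f_t$ and the function $d f_t^\double(Z^\double)$ --- and preserves $\lambda$ --- because $\lambda_0$ is rotation-invariant --- hence the Liouville field $Z$ and $Z^\double$; moreover $\widehat{\Psi}$ acts trivially in the $s$-direction, and $X_t$ has no $\partial_\theta$-component, so the $\theta$-dependence of $q\mapsto e^{i\theta}z$ never enters. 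Hence $\widehat{\Psi}$ commutes with the flow defining $\Phi$, and restricting to the slices $\doublefunW{f}\times\{1\}\times\cercle$ and $\doublefunW{g}\times\{0\}\times\cercle$ gives $\Phi\circ\Psi_f=\Psi_g\circ\Phi$, where $\Psi_f$ and $\Psi_g$ are the restrictions of $\widehat{\Psi}$ to $\doublefunW{f}\times\cercle$ and to $\doublefunW{g}\times\cercle$.

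It remains to transfer the conclusion. Assume \Cref{ThmMain} holds for the equation $f$, i.e.\ $\Psi_f$ is smoothly isotopic to a contactomorphism $\Psi_{f,c}$ of $(\doublefunW{f}\times\cercle,\ker\alpha_f)$ all of whose nonzero powers fail to be contact-isotopic to the identity. Then $\Psi_{g,c}\coeq\Phi\circ\Psi_{f,c}\circ\Phi^{-1}$ is a contactomorphism of $(\doublefunW{g}\times\cercle,\ker\alpha_g)$; it is smoothly isotopic to $\Psi_g$, by conjugating a smooth isotopy from $\Psi_f$ to $\Psi_{f,c}$ by the diffeomorphism $\Phi$ and using $\Phi\circ\Psi_f\circ\Phi^{-1}=\Psi_g$; and for each $k\neq0$ its power $\Psi_{g,c}^k=\Phi\circ\Psi_{f,c}^k\circ\Phi^{-1}$ cannot be contact-isotopic to the identity, since conjugating such a contact isotopy by $\Phi^{-1}$ would make $\Psi_{f,c}^k$ contact-isotopic to the identity. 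As $(\doublefunW{g}\times\cercle,\ker\alpha_g)$ and $\Psi_g$ are exactly the contact manifold and the diffeomorphism appearing in the statement of \Cref{ThmMain}, this is precisely the conclusion of \Cref{ThmMain} for $\psi-c$, as desired.
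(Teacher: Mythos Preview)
Your proof is correct and follows essentially the same route as the paper: both use \Cref{LemmaUniqContStrDouble} to produce a contactomorphism between the two doubles and then observe that this contactomorphism intertwines the two incarnations of $\Psi$ because the flow in question is along $Z^\double$ and hence leaves the angular coordinate on $\compl$ and the $\cercle$-factor untouched. Your write-up is in fact more explicit than the paper's --- you spell out the boundary-gradient-like verifications, the reason the $\theta$-dependence of $\widehat{\Psi}$ is harmless, and the conjugation argument transferring the conclusion --- whereas the paper condenses all of this into the single remark that the flow commutes with $(q,z,s,\theta)\mapsto(q,e^{i\theta}z,s,\theta)$.
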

\begin{proof}[Proof (\Cref{RmkChoiceRegEq})]
	Let $f_1\coeq\psi-c$, $f_0\coeq f=\chi(\psi-c)$ and $f_t=tf_1+(1-t)f_0$.
	According to \Cref{LemmaUniqContStrDouble} (and the expression for the vector field from \Cref{LemmaGenUniqContStrDouble}), the flow $\psi_{X_{t}}^1$ of the vector field $X_t=\frac{f_1-f_0}{df_t^\double(Z^\double)} Z^\double$ 
	gives a contactomorphism from $(\doublefunW{f_1}\times\cercle,\ker(\alpha_{f_1}))$ to $(\doublefunW{f_0}\times\cercle,\ker(\alpha_{f_0}))$.
	In order to prove \Cref{RmkChoiceRegEq}, it's then enough to show that the diffeomorphism $\psi_{X_{t}}^1\circ\Psi\circ (\psi_{X_{t}}^1)^{-1}$ of $\doublefunW{f_0}$ is still induced by the diffeomorphism of $F\times\compl\times\real\times\cercle$ given by $(q,z,s,\theta)\mapsto (q,e^{i\theta}z,s,\theta)$.
	But this is indeed the case, because the flow $\psi_{X_{t}}^1$ preserves the angular component of the $\compl$-factor as well as the $\cercle$-factor of the product $F\times\compl\times\real\times\cercle$, and hence commutes with $(q,z,s,\theta)\mapsto (q,e^{i\theta}z,s,\theta)$.
\end{proof}

\paragraph*{Proving non-triviality of $\Psi_c^k$.}
We know from \Cref{SubSecProdDoubleLiouvDomWithCircle} that, inside the Liouville manifold $(F\times\compl\times \real_s\times\cercle_\theta,  \lambda^\double=\lambda+sd\theta)$, the preimage of $(-\infty,0]$ via $F\times\compl\times \real_s\times\cercle_\theta\rightarrow \real$, $(q,z,s,\theta)\mapsto s^2+f(q,z)$, gives a Liouville filling of $(V=\doublefunW{f}\times\cercle_\theta,\alpha_f)$.
In particular, we have a natural isomorphism of real vector bundles $\xistab=\xi\oplus\varepsilon_V^2 \simeq T(F\times\compl\times\real\times\cercle)\vert_V$ over $V=\doublefunW{f}\times\cercle$, where $\varepsilon_V$ is the trivial real line bundle $V\times\real\to V$, given by the natural inclusion on $\xi$ and by sending the two canonical sections $e_1,e_2$ of $\varepsilon_V^2$ to the Liouville vector field and Reeb vector field respectively.
Moreover, under this isomorphism, $\omega^\double$ and $J^\double$ on $F\times\compl\times\real\times\cercle$ give, respectively, a conformal symplectic structure $\CS_{\xistab}$ on $\xistab$, which restricts to $\CS_{\xi}$ on $\xi$ and such that $\varepsilon_V^{2}$ is the $\CS_{\xistab}$-orthogonal complement of $\xi$, and a complex structure $J$ on $\xistab$ tamed by $\CS_{\xistab}$.
Lastly, the trivialization $\mu$ described in \Cref{EqTriv} gives, via the isomorphism just described, a trivialization of $\xistab$, which we still denote by $\mu$.
We are then in the setting of \Cref{SubSecFamLegBases} and can use of the invariant described there in order to prove the non-triviality of the contactomorphism $\Psi_c^k$.

More precisely, as already announced at the beginning of the section, the candidate contactomorphism is $\Psi_c = \Psi \circ \psi_Y^1$, with $\psi_Y^t$ the flow at time $t$ of $Y$ defined in \Cref{LemmaContactomorphismProdWithCircle}. 
In order to show that, for each $k\neq 0$, $\Psi_c^k$ is not contact isotopic to the identity, we are going to proceed by steps as follows:
\begin{enumerate}[align=left, leftmargin=*, label=Step \arabic*.]
	\item \label{Step1} Let $W_-\coeq \doublefunW{f}\cap\{s=-1\}\subset F\times\compl\times\real$ (see \Cref{FigTwoDoubles}). 
	Notice that, by definition of $a$ and construction of $f$, $W_-$ is just a (slightly shrinked) copy of $W$ inside $F\times\compl\times\real$, namely $\{\psi\leq c-a, s=-1\}\simeq W=\{\psi\leq c\}$ (recall $\psi$ has no critical values between $c-a$ and $c$). 
	We then describe an explicit $\cercle$-family of Lagrangian frames $\family$ for $\ker(\alpha_f)$ on $W_-\times\cercle$. 
	\item \label{Step2} We remark that, for all $t\geq0$, $\psi_{Y}^{t}(W_-\times\cercle)\subset W_-\times\cercle$, and we describe the behavior of the restriction of $\Psi_c$, and its iterates, to $W_-\times\cercle$.
	This allows us to describe, for all $k\geq 1$, the pushforward $(\Psi_c^k)_*\family$ of $\family$ via the $k$-th iterate of $\Psi_c$. 
	\item \label{Step3} We describe, for each $k\geq 0$, the family of matrices $B_k\co \cercle\rightarrow\GL_{n+1}(\compl)$ associated, via the trivialization $\mu$, to the stabilization $(\Psi_c^k)_*\family\oplus Z^\double$. 
	We then show that, if $k\geq1$, $B_k$ is not homotopically trivial as map $\cercle\rightarrow\GL_{n+1}(\compl)$.
\end{enumerate}
According to \Cref{LemmaObstrStabTriv}, this proves that, for all $k\geq 1$, the $k$-th iterate of the contactomorphism $\Psi_c$ is not contact isotopic to the identity. The space of contactomorphisms being a group, this implies the same conclusion for all $k<0$.

\paragraph{\ref{Step1}}
Let $q_0\in F$ be the global minimum of $\psi_F$, and $(v_1,\ldots,v_{n-1})$ be a Lagrangian frame of $(T_{q_0}F,\omega_F)$.
Let also $z_0=0\in\compl$ and $s_0=-1$. 
Notice in particular that, by the choice of $0<a<\frac{c-\min\psi_F}{2}$ and $\chi$ in the definition of $f$, the point $(q_0,z_0,s_0,\theta)\in F\times\compl\times\real\times\cercle$ actually lies in $W_-\times\cercle\subset\doublefunW{f}\times\cercle$.
Moreover, $\lambda^\double= - d\theta$ at the point $\gamma(\theta)$ (recall $Z_F$ is gradient-like for $d\psi_F$), so that
$\xi_f=\ker(\alpha_f)$ coincides, over the point $(q_0,z_0=0,s_0=-1,\theta)$, with the subspace $T_{q_0}F\oplus T_0\compl$ of $T_{(q_0,z_0,s_0,\theta)}\left(F\times\compl\times\real\times\cercle\right)$. 
In particular, if one considers the loop
\begin{align*}
	\gamma\co\cercle &\rightarrow W_-\times\cercle\subset\doublefunW{f}\times\cercle \\
	\theta&\mapsto(q_0,z_0=0,s_0=-1,\theta)
\end{align*}
then $\family\coeq\left(\gamma, v_1,\ldots,v_{n-1},\partial_x\right)$ is a $\cercle$-family of Lagrangian frames for $\ker(\alpha_f)$ over $\gamma$, where we denoted $z=(x,y)\in\compl$.

\paragraph{\ref{Step2}}
Here, we give explicit expressions for $\Psi_c^k$ and $(\Psi_c^k)_*\family$.
\begin{lemma}
	\label{Lemma1Step2}
	The time$-1$ flow $\psi_Y^1$ of $Y$ satisfies $\psi_Y^1(W_-\times\cercle)\subset W_-\times\cercle$ and, for each $k\geq 0$, $(\psi_Y^1)^k\vert_{W_-\times\cercle}$ has the following form:
	\begin{align*}
	(\psi_Y^1)^k \co & W_-\times\cercle \rightarrow W_-\times\cercle\\
	&(q,re^{i\varphi},-1,\theta)  \mapsto (Q_k(q,r),R_k(r) e^{i\varphi},-1,\theta)
	\end{align*}
	for some functions $Q\co F\times\compl \rightarrow F$ and $R\co \compl \rightarrow\real$ depending, respectively, only on $(q,r)$ and $r$, where we use polar coordinates $z=re^{i\varphi}\in\compl$. 
	Moreover, $R_k(0)=0$ and $Q_k(q_0,r)=q_0$ for every $r\geq0$
	\\
	In particular, $\Psi_c^k\circ\gamma=\gamma$ and, moreover, $\psi_Y^1$ commutes with $\Psi$ on the set $W_-\times\cercle$ (which is obviously preserved by $\Psi$ too), hence $\Psi_c^k = \Psi^k\circ(\psi_Y^1)^k$ on $W_-\times\cercle$.
\end{lemma}

\begin{lemma}
	\label{Lemma2Step2}
	Let $\gamma, (v_1,\cdots,v_{n-1})$ and $\family$ be as in \ref{Step1}. 
	Then, for each $k\geq 0$, there are a Lagrangian frame $(v_1^{k},\ldots,v_{n-1}^k)$ of $T_{q_0}F$ and a real number $s_k\neq 0$ such that $(\Psi_c^k)_*\family$ 
	is given by 
	\begin{equation*}
		\left(\,\gamma,\, v^k_1, \,\ldots,\,v_{n-1}^k,\, s_k \left[\cos(k\theta) \partial_x+\sin\left(k\theta\right)\partial_y\right]\,\right) \text{ .}
	\end{equation*}
\end{lemma}

\begin{proof}[Proof (\Cref{Lemma1Step2})]
	We give a proof by induction on $k$. 
	The case $k=0$ is trivial. 
	Moreover, it is not hard to check that if the statement of \Cref{Lemma1Step2} holds for both $k=N$ and $k=1$, then it also holds for $k=N+1$. 
	In other words, it's actually enough to show that the lemma holds for $k=1$.
	We then show that $\psi_Y^1$ can be written in the desired form, with $Q_1,R_1$ satisfying the desired properties. 
	
	For this, we use the formula for $Y$ given in \Cref{LemmaContactomorphismProdWithCircle}.
	Notice that, on $W_-\times\cercle$, the function $f$ is constant and the coordinate $s$ is constant at $-1$.
	Moreover, on $\What = F\times\compl$, the Liouville vector field $Z$ for $\lambda=\lambda_F + \frac{1}{2}r d\varphi$ is $Z_F+\frac{1}{2}r\partial_r$ and the vector field $X$ generating the circle action on the $\compl$-factor is just $\partial_\varphi$, using polar coordinates $z=re^{i\varphi}\in\compl$; in particular $\lambda(X)=\frac{1}{2}r^2$. 
	Hence, for all $(q,z,-1,\theta)\in W_-\times\cercle $, we have
	\begin{equation}
		\label{eq:vect_field_Y}
			Y (q,z,-1,\theta) 
			\; = \; 
			-\frac{r^2}{2} Z_F(q)-\frac{r^3}{4}\, \partial_r (re^{i\varphi})	
			\; = \; 
			-\frac{r^2}{2} \,Z (q,z) \text{ .}
	\end{equation}
	\\
	In particular, the flow $\psi_Y^t\co \doubleW\times\cercle\rightarrow\doubleW\times\cercle$ of $Y$ at time $t\geq 0$ then satisfies $\psi_Y^t(W_-\times\cercle)\subset W_-\times\cercle$. 
	Indeed, by the choice of the parameter $a$ in the definition of $f$, the restriction of $Y$ to $\partial (W_-\times\cercle)=\{\psi=c-a,s=-1\}$ is either $0$ (where $r=0$) or transverse to $\partial (W_-\times\cercle)$ (where $r\neq0$) and pointing in the direction of $W_-\times\cercle$.
	In other words, the orbit via the flow of $Y$ of each point of $W_-\times\cercle$ stays inside $W_-\times\cercle$ at all positive times.
%
	\\
	It also follows from \Cref{eq:vect_field_Y} that, at time $t=1$, the embedding $\psi_Y^1\co W_-\times\cercle\rightarrow W_-\times\cercle$ can be written as $\psi_Y^1(q,re^{i\varphi},-1,\theta)=(Q_1(q,r),R_1(r)e^{i\varphi},-1,\theta)$, for some functions $Q_1\co F\times\compl \rightarrow F$ and $R_1\co \compl \rightarrow\real$, with $Q_1$ and $R_1$ both independent of the angular component $\varphi$ on $\compl$. 
	Moreover, $\psi_Y^1$ clearly commutes with $\Psi$ on $W_-\times\cercle$ and, as $Y(q_0,z_0=0,-1,\theta)=0$ and $Z_F(q_0)=0$ (recall $Z_F$ is gradient--like for $\psi_F$), we also have $R_1(0)=0$ and $Q_1(q_0,r)=q_0$ for each $r\geq0$, as desired. 
	This concludes the proof of the base case $k=1$ of \Cref{Lemma1Step2}.
\end{proof}

\begin{proof}[Proof (\Cref{Lemma2Step2})]
	According to \Cref{Lemma1Step2}, $\Psi_c^k=\Psi^k\circ(\psi_Y^1)^k$ and $\Psi_c\circ\gamma=\gamma$.
	Notice that $d_{\gamma(\theta)}\Psi$ acts on $T_{\gamma(\theta)}(F\times\compl\times\real\times\cercle)$ as $\Id$ on the subspace $T_{q_0}F\oplus T_{s_0}\real\oplus T_{\theta}\cercle$ and as the rotation of angle $\theta$ on the subspace $T_{z_0=0}\compl$.
	It's then enough to prove that there are $v_1^k,\ldots,v_{n-1}^k \in T_{q_0}F$ and $s_k>0$ such that
	\begin{equation*}
		(\psi_Y^1)^k_{\;*}\family = \left(\,\gamma,\, v^k_1, \,\ldots,\,v_{n-1}^k,\, s_k \partial_x\,\right) \text{ .}
	\end{equation*}
	
	\noindent
	Using the expression for $\psi_Y^1$ from \Cref{Lemma1Step2} (recall in particular that $Q_k(q_0,r)=q_0$ for $r\geq0$), an explicit computation shows that $d_{\gamma(\theta)}\psi_{Y}^1 (\partial_x) = R_k'(0) \partial_x$; notice that $R_k'(0)$ is necessarily non-zero as $\psi_Y^1$ is a diffeomorphism.
	In particular, choosing $s_k\coeq R_k'(0)$ and $v_j^k\coeq d_{(q_0,0)}Q_k (v_j)$ for each $j=1,\ldots,{n-1}$, we get the desired expression for $(\psi_Y^1)^k_{\;*}\family$.
\end{proof}

\begin{remark}
	\label{rmk:flattening}
	The flattening procedure from $\doublefunW{\psi-c}\times\cercle$ to $\doublefunW{f}\times\cercle$ done at the beginning of the section finds its motivation in \Cref{Lemma1Step2,Lemma2Step2}. 
	Indeed, they both rely on the very explicit formula for $Y$ in the flattened picture, namely \Cref{eq:vect_field_Y}.
\end{remark}

\paragraph{\ref{Step3}} 
	We now consider the stabilization $(\Psi_c^k)_*\family \oplus Z^\double$ given, at each point $\gamma(\theta)$, by the following Lagrangian frame of $T_{\gamma(\theta)}(F\times\compl\times\real\times\cercle)$:
	\begin{equation*}
		\left(v^k_1,\ldots,v_{n-1}^k,  s_k \left[\cos(k\theta) \partial_x+\sin\left(k\theta\right)\partial_y\right], Z^\double (\gamma(\theta))\right) \text{ .}
	\end{equation*} 
	Notice that $\cos(k\theta) \,\partial_x+\sin(k\theta)\,\partial_y = \left[\cos(k\theta)+\sin(k\theta)\,J^\double\right]\partial_x$ and $Z(\gamma(\theta))=-\partial_s$.
	In particular, the family of matrices $B_k\co \cercle \rightarrow \GL_{n+1}(\compl)$ associated via the trivialization $\mu$ (defined in \Cref{EqTriv}) has the form
	\begin{equation*}
	B_k(\theta) = \left(
	\begin{array}{r@{}c|c@{}l}
	&    B_{0,k} & \mbox{0} \\\hline
	&    \mbox{0} &  
	\begin{matrix}\rule{0pt}{2ex}
	s_k e^{ik\theta} & 0 \\
	0 & -1
	\end{matrix}   
	\end{array} 
	\right)
	\end{equation*}
	where $B_{0,k}\in\GL_{n-1}(\compl)$ does not depend on $\theta$.
	
	Thus, $B_k$ is homotopically trivial as map $ \cercle \rightarrow \GL_{n+1}(\compl)$ if and only if $k = 0$.
	Indeed, $B_0$ is a constant map and $\det(B_k(\theta))=b_k e^{ik\theta}$, for a certain $b_k\in\compl\setminus\{0\}$ (notice that $b_k\neq 0$ necessarily because $B_k(\theta)\in GL_{n+1}(\compl)$), hence the map $\cercle\to\compl\setminus\{0\}$ defined as $\theta\mapsto \det(B_k(\theta))$ is homotopically non-trivial if $k\geq 1$.
	This concludes \ref{Step3}, hence the proof of \Cref{ThmMain}.

\bibliographystyle{halpha}
\bibliography{my_bibliography}


\end{document}